\theoremstyle{plain}
\newtheorem{thm}{Theorem}[section]
\newtheorem{cor}[thm]{Corollary}
\newtheorem{prop}[thm]{Proposition}
\newtheorem{rem}[thm]{Remark}
\theoremstyle{remark}
\DeclareMathOperator{\inter}{int}
\newcommand{\R}{\mathbb{R}}
\newcommand{\G}{\Gamma}
\newcommand{\D}{\mathcal{D}(\G)}
\newcommand{\A}{\mathcal{A}(\G)}
\newcommand{\bi}{\varphi}
\newcommand{\SSCS}{\mathcal{C}^{2}(S,\bi)}
\newcommand{\SSCC}{\mathcal{C}^{3}(S,\bi)}
\newcommand{\Bi}{\Phi}				
\newcommand{\Bib}{\Bi_{B}}			
\newcommand{\Biw}{\Bi_{W}}			
\newcommand{\BI}{\hat{\Bi}}			
\begin{document}

\title[A characterization of alternating link exteriors]{A characterization of alternating link exteriors in terms of cubed complexes}
\author{Shunsuke Sakai}
\address{Department of Mathematics\\
	Graduate School of Science\\
	Hiroshima University\\
	Higashi-Hiroshima, 739-8526, Japan}
\email{shunsuke463@gmail.com}

\begin{abstract}
	We give a characterization of alternating link exteriors in terms of cubed complexes.
	To this end, we introduce the concept of a ``signed BW cubed-complex'',
	and give a characterization for a signed BW cubed-complex to 
	have the underlying space which is homeomorphic to an alternating link exterior.
\end{abstract}

\maketitle

\section{Introduction}\label{Intro}
	Recently, Greene~\cite{17G} and Howie~\cite{17Ho}, independently, established
	intrinsic characterizations of alternating links in terms of a pair of spanning surfaces,
	answering an old question of R.\ H.\ Fox.
	These results can be regarded as characterizations of alternating link exteriors which have marked meridians
	(see \cite[Theorem 3.2]{17Ho}).

	The purpose of this paper is to give a characterization of alternating link exteriors
	from the viewpoint of cubed complexes.
	Our starting point is a cubical decomposition of alternating link exteriors,
	which is originally due to Aitchison, and is used by
	Agol~\cite{Agol}, Adams~\cite{Adams}, Thurston~\cite{DThurston}, Yokota~\cite{Yokota1, Yokota2}
	and Sakuma-Yokota~\cite{16SY}.
	Thus we call it the \textit{Aitchison complex}.
	The Aitchison complex for an alternating link is actually a mapping cylinder of the natural map
	from the boundary of the exterior of the alternating link onto the Dehn complex.
	For a detailed description and historical background, see \cite{16SY}.
	
	In this paper, we introduce the concepts of
	a \textit{signed BW squared-complex} (or an \textit{SBW squared-complex}, for short)
	and
	a \textit{signed BW cubed-complex} (or an \textit{SBW cubed-complex}, for short),
	and give a combinatorial description of the Dehn complex and the Aitchison complex
	as an SBW squared-complex and an SBW cubed-complex, respectively.
	The main theorem gives a necessary and sufficient condition
	for a given SBW cubed-complex to be isomorphic to the Aitchison complex of some alternating link exterior
	(Theorem~\ref{SCCA}).
	This implies a characterization of alternating link exteriors in terms of cubed complexes (Corollary~\ref{main-cor}).
	
	This paper is organized as follows.
	In Section~\ref{desc},
	we give an intuitive description of the Aitchison complex and the Dehn complex
	following~\cite{DThurston, Yokota1}.
	In Section~\ref{SC},
	we introduce the SBW squared-complex and the SBW cubed-complex,
	and describe the Dehn complex and the Aitchison complex
	in terms of the SBW squared-complex and the SBW cubed-complex, respectively.
	In Section~\ref{chara}, we prove the main theorem.

	The author would like to thank his supervisor, Makoto Sakuma, for valuable suggestions.
	He would also like to thank Naoki Sakata and Takuya Katayama for their support and encouragement.

\section{An intuitive description of the Aitchison complexes \\ and the Dehn complexes for alternating links} \label{desc}
	In this section,
	we give an intuitive description of the Aitchison complexes and the Dehn complexes
	following~\cite{DThurston, Yokota1}.
	For detailed description, see~\cite{16SY}.
	
	Let $\G \subset S^{2}$ be a connected alternating link diagram
	and $L\subset S^{3}$ the alternating link represented by $\G$.
	We pick two points $P_{+}$ and $P_{-}$ in the components of $S^{3}\setminus S^{2}$ one by one.
	These points are regarded to lie above and below $S^{2}$, respectively.
	Identify $S^{3}\setminus \{P_{+}, P_{-}\}$ with $S^{2}\times \R$, and assume the following.
	The diagram $\G$ is regarded as a $4$-valent graph in $S^{2}\times \{0\}$,
	$L\subset \G \times [-1, 1]\subset S^{2}\times [-1, 1]$,
	and $L$ intersects $S^{2}\times \{0\}$ transversely in $2n$ points,
	where $n$ is the crossing number of $\G$.
	
	For each vertex $x$ of $\G$,
	consider a square $s$ in $S^{2}=S^{2}\times \{0\}$
	which forms a relative regular neighborhood of $x$ in $(S^{2}, \G)$
	such that the four vertices of $s$ lie in the four germs of edges around $x$.
	Let $x^{+}$ and $x^{-}$ be the points of $L$ which lie above and below $x$, respectively.
	Consider the two pyramids, $\Delta^{\pm}$, in $S^{3}$
	which are obtained as the joins $x^{\pm}*s$.
	We may assume $\Delta^{\pm}\cap L=\{x^{\pm}\}$, $\Delta^{+}\cap \Delta^{-}=s$.
	Note that $\Delta^{+}\cup \Delta^{-}$ is an octahedron
	which contains the crossing arc of $L$ determined by $x$ (see Figure~\ref{crosses}(b)).
	Let $\{\Delta_{1}^{\pm}, \dots, \Delta_{n}^{\pm}\}$
	be the set of $2n$ pyramids in $S^{3}$ located around the vertices of $\G$.
	
	Pick an edge $e$ of the graph $\G$,
	and let $x_{1}$ and $x_{2}$ be the vertices of $\G$ joined by $e$,
	such that the arc $\tilde{e}=L\cap (e\times [-1, 1])$ in $L$
	joins $x_{1}^{+}$ and $x_{2}^{-}$ (see Figure~\ref{crosses}(a)).
	Let $a_{i}$ be the vertex of $s_{i}$ contained in $e$ ($i = 1,2$).
	Let $R$ be one of the two regions of $\G$ in $S^{2}$ whose boundary contains the edge $e$,
	and let $b_{i}$ be the vertex of $s_{i}$ 
	such that the edge $a_{i}b_{i}$ of $s_{i}$ is contained in $R$.
	\begin{figure}
		\begin{listliketab}
		\begin{longtable}[c]{rcp{0.6\textwidth}}
		(a) & \raisebox{-\height}{\includegraphics[width=180pt]{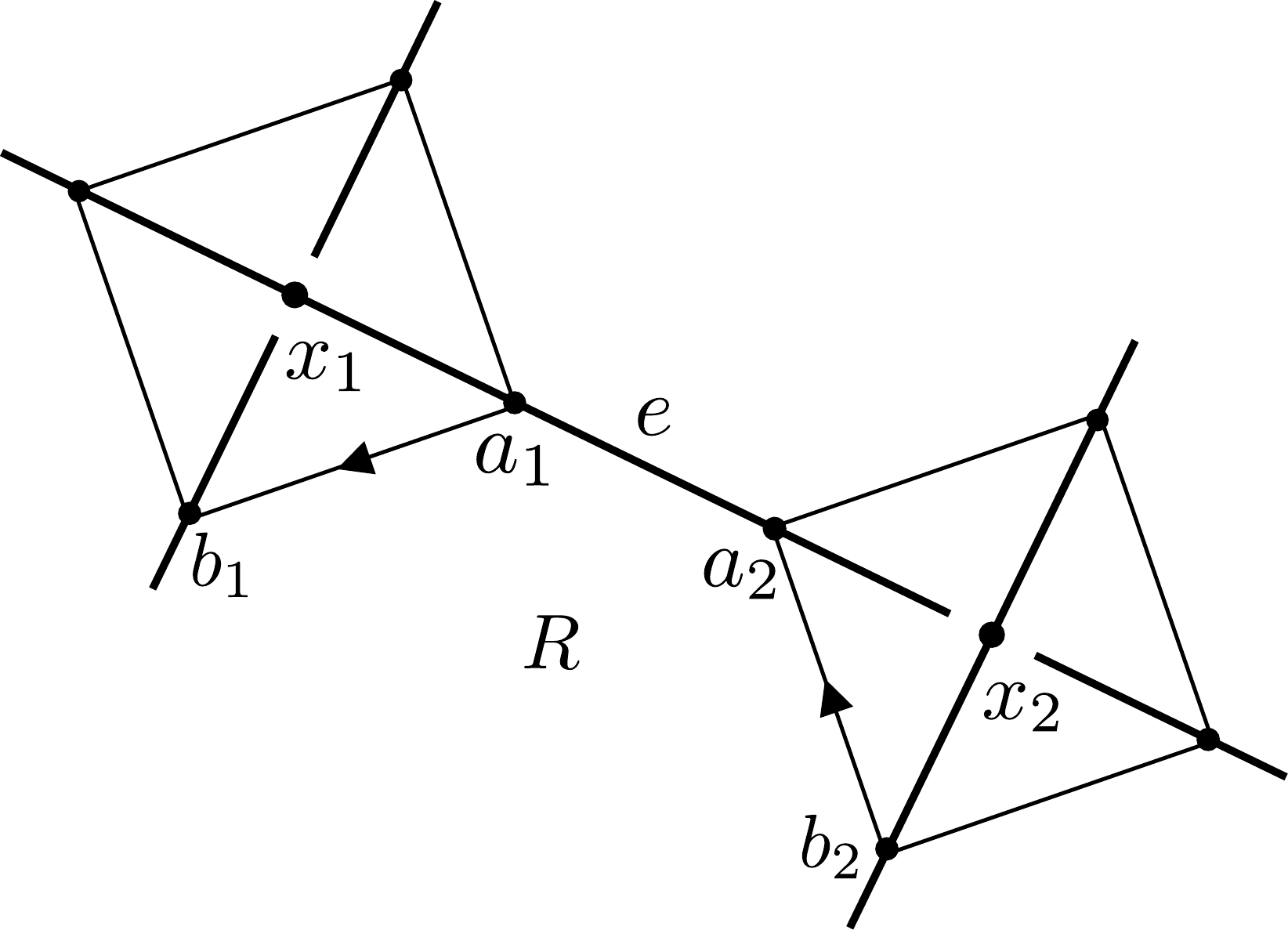}}\\

		(b) & \raisebox{-\height}{\includegraphics[width=230pt]{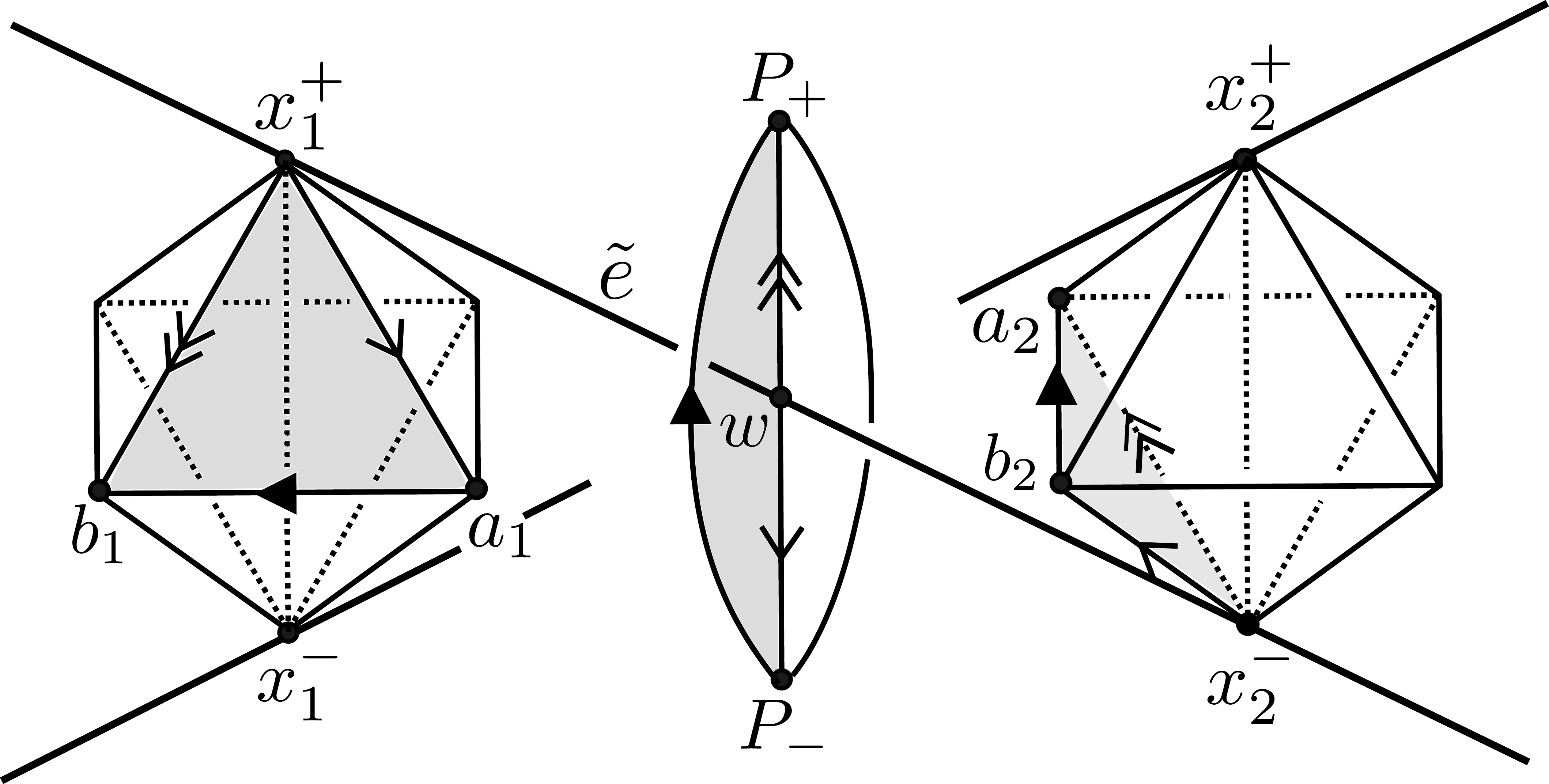}}
		\end{longtable}
		\end{listliketab}
	\caption{}
	\label{crosses}
	\end{figure}
	Let $w = \tilde{e} \cap S^{2}$ be the ``middle point'' of $\tilde{e}$,
	and consider the vertical line segments $wP_{+}$ and $wP_{-}$.
	Then we have the following relative isotopies in $(S^{3}, L)$ (see Figure \ref{crosses}(b)).
	\begin{enumerate}
		\item The edges $x_{1}^{+}a_{1}$ and $x_{1}^{+}b_{1}$ of the pyramid $\Delta_{1}^{+}$
			are isotopic to the vertical line segments $wP_{-}$ and $wP_{+}$, respectively.
		\item The edges $x_{2}^{-}a_{2}$ and $x_{2}^{-}b_{2}$ of the pyramid $\Delta_{2}^{-}$
			are isotopic to the vertical line segments $wP_{+}$ and $wP_{-}$, respectively.
		\item The edge $a_{1}b_{1}$ of $s_{1}$ is isotopic to an almost vertical line segment
			which starts $P_{-}$, passes through a point in the interior of $R$ and reaches $P_{+}$.
		\item The edge $b_{2} a_{2}$ of $s_{2}$ is isotopic to an almost vertical line segment
			which starts $P_{-}$, passes through a point in the interior of $R$ and reaches $P_{+}$.
	\end{enumerate}
	These isotopies determine an isotopy (and so a homeomorphism)
	from the face $x_{1}^{+}a_{1}b_{1}$ of $\Delta_{1}^{+}$
	onto the face $x_{2}^{-}b_{2}a_{2}$ of $\Delta_{2}^{-}$.
	In this way, we obtain a paring of faces of
	the octahedra $\{\Delta_{i}^{+}\cup \Delta_{i}^{-}\}_{i}$
	and a homeomorphism between the faces in each pair.
	Let $O_{i}^{\pm}$ be the cubes obtained from the pyramids $\Delta_{i}^{\pm}$
	by chopping off a small regular neighborhoods of $x_{i}^{\pm}$.
	Then the above pairing and homeomorphisms determine
	a gluing information for the cubes $\{O_{i}^{\pm}\}_{i}$.
	Let $\A$ be the resulting cubed complex and $\D$ the subcomplex of $\A$ obtained
	by gluing the squares $\{\Delta_{i}^{+}\cap \Delta_{i}^{-}\}_{i}$.
	Then we have the following.
	\begin{prop}
		For a connected alternating diagram $\G$,
		$\A$ gives a cubical decomposition of the exterior $E(L)$ of the link $L$ represented by $\G$.
		Moreover, there is a deformation retraction of $\A$ onto $\D$,
		and so, $\D$ is a spine of $E(L)$.
	\end{prop}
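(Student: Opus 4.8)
The plan is to realize the union of the bipyramids $\{\Delta_i^+\cup\Delta_i^-\}_{i=1}^{n}$ as a polyhedral decomposition of $S^3$, then to pass to the exterior by truncating at the apexes, and finally to collapse the resulting frustums onto their square bases.

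First I would fix the identification $S^3\setminus\{P_+,P_-\}\cong S^2\times\R$ and regard $S^3$ as the suspension of $S^2$ with suspension points $P_\pm$. For each crossing $x_i$ the bipyramid $\Delta_i^+\cup\Delta_i^-$ is the join $\{x_i^+,x_i^-\}*s_i$; its interior is a neighborhood of the crossing arc of $L$ determined by $x_i$, and its apexes $x_i^\pm$ lie on $L$. Using the isotopies (1)--(4), I would check that, for each edge $e$ of $\G$, the face $x_1^+a_1b_1$ of $\Delta_1^+$ is glued to the face $x_2^-b_2a_2$ of $\Delta_2^-$ by the homeomorphism sending $x_1^+\mapsto x_2^-$, $a_1\mapsto b_2$, $b_1\mapsto a_2$; in particular every equatorial vertex is carried either to $P_+$ or to $P_-$. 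The principal task here is to verify that these face pairings are mutually consistent around each edge and each equatorial vertex, and that the bipyramids have pairwise disjoint interiors whose union is all of $S^3$. The covering is seen by observing that the bipyramids fill a neighborhood of $S^2\times\{0\}$ along the crossings, while the isotopies carry the triangular side faces across the complementary regions of $\G$ and up to the suspension points $P_\pm$, sweeping out the remainder of $S^3$.

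Granting this decomposition of $S^3$, I would then pass to the exterior. The link $L$ meets $\bigcup_i(\Delta_i^+\cup\Delta_i^-)$ only along the crossing arcs, each running from $x_i^-$ to $x_i^+$ through the interior of a single bipyramid. Removing an open tubular neighborhood $N(L)$ thus amounts to chopping off a small regular neighborhood of each apex $x_i^\pm$, which turns the pyramid $\Delta_i^\pm$ into a square frustum $O_i^\pm$. The truncation faces fit together along $\partial N(L)$ to form $\partial E(L)$, while the base and side faces inherit the gluings above; hence $\A=\bigcup_i(O_i^+\cup O_i^-)$ is a cubical decomposition of $E(L)=S^3\setminus\inter N(L)$, which is the first assertion.

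For the deformation retraction I would use that each frustum $O_i^\pm$ is combinatorially a product $s_i\times[0,1]$ with base $s_i=\Delta_i^+\cap\Delta_i^-$ at one end and the truncation face at the other: collapsing the $[0,1]$-factor defines a strong deformation retraction of $O_i^\pm$ onto $s_i$ that fixes $s_i$ pointwise and slides the truncation face down the cone lines toward the removed apex. The hard point is compatibility. I must check that on each pair of glued side faces the two retractions agree, which holds because the gluing homeomorphisms were induced by the apex-to-base isotopies (1)--(4) and hence commute with this collapse, and that the retractions of $O_i^+$ and $O_i^-$ agree on the common base $s_i$, which is immediate since both fix it. The local retractions then assemble into a global strong deformation retraction of $\A$ onto the subcomplex $\D=\bigcup_i s_i$; equivalently, $\A$ is the mapping cylinder of the induced collapse $\partial E(L)\to\D$. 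Since $\A\cong E(L)$, this exhibits $\D$ as a spine of $E(L)$, completing the proof.
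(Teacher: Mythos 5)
Your overall outline (expand the crossing bipyramids, truncate at the apexes, collapse the frustums) is the same as the paper's construction, but your first stage contains a genuine error: the \emph{un-truncated} bipyramids do not give a polyhedral decomposition of $S^3$. The face pairing along an edge $e$ of $\G$ is realized by the isotopies (1)--(4), which carry \emph{both} apexes $x_1^{+}$ and $x_2^{-}$ to the midpoint $w$ of $\tilde{e}$; since the overpass at $x_1$ meets two distinct edges $e\neq e'$ of $\G$, the single apex $x_1^{+}$ would have to occupy the two distinct points $w_{e}$ and $w_{e'}$ of $L$, so the bipyramids cannot be simultaneously embedded in $S^3$ realizing all the face identifications. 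Abstractly, the quotient of the full bipyramids by the face pairings identifies, for every edge $e$, the endpoints of $\tilde{e}$, hence all apexes along a component of $L$ fall into one class; the resulting space is not $S^3$ but $S^3$ with each component of $L$ collapsed to a point --- equivalently, $E(L)$ with each boundary torus coned off, i.e.\ the ``Dehn space'' of the paper's final remark. This space is not even a manifold at those points: the link of a collapsed point is the corresponding boundary torus of $E(L)$, tiled by the truncation squares, not a $2$-sphere. Tellingly, you propose to check link consistency only ``around each edge and each equatorial vertex''; the check at the apex vertices is precisely the one that fails. Relatedly, your statement that $L$ meets $\bigcup_i(\Delta_i^{+}\cup\Delta_i^{-})$ ``only along the crossing arcs, each running from $x_i^{-}$ to $x_i^{+}$ through the interior'' contradicts the setup $\Delta_i^{\pm}\cap L=\{x_i^{\pm}\}$: the crossing arc is an arc of the complement with endpoints on $L$, not a sub-arc of $L$, and if $L$ literally ran through the interior of each octahedron, then deleting $N(L)$ would drill a tunnel through its middle rather than truncate the two apexes into frustums.

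The repair is exactly the order of operations in the paper: chop off the apex neighborhoods \emph{first}, obtaining the cubes $O_i^{\pm}$, and only then glue; the truncation faces then tile $\partial E(L)$ and the glued complex $\A$ is an honest cubical decomposition of $E(L)$. (Alternatively, keep your quotient but identify it correctly as the coned-off exterior and pass to $E(L)$ by removing open cone neighborhoods of the singular vertex classes.) Your second half is correct and agrees with the paper: the identification of each face pair sends $x_1^{+}\mapsto x_2^{-}$, $a_1\mapsto b_2$, $b_1\mapsto a_2$, the side-face gluings respect the product structures $s_i\times[0,1]$ (in the notation of Section~\ref{SC}, $\hat{f}_{e}(x,t)=(f_{e}(x),-t)$), so the fiberwise collapses assemble into a strong deformation retraction of $\A$ onto $\D$, exhibiting $\A$ as the mapping cylinder of $\partial E(L)\to\D$ and $\D$ as a spine of $E(L)$.
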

	In fact, $\D$ is isotopic to the Dehn complex of the diagram $\G$.
	(For the definition of the Dehn complex, see~\cite{BH, 13BH, Wise}.)
	We call $\A$ the \textit{Aitchison complex} of $\G$.

\section{Signed BW complexes}\label{SC}
	In this section,
	we introduce the concept of a \textit{signed BW squared-complex}
	and that of a \textit{signed BW cubed-complex},
	and then describe the Dehn complexes and the Aitchison complexes for alternating links by using these concepts.

	By a \textit{signed BW square} (or an \textit{SBW-square}, for short),
	we mean the square $s:=[0,1]^{2}$ with the following information:
	\begin{enumerate}
		\item The vertices $(0, 0)$ and $(1, 1)$ are endowed with the sign $-$,
			and the vertices $(0, 1)$ and $(1, 0)$ are endowed with the sign $+$.
		\item The horizontal edges $I\times \{0\}$ and $I\times \{1\}$ are endowed with the color $B$ (Black),
			and the vertical edges $\{0\} \times I$ and $\{1\} \times I$ are endowed with the color $W$ (White).
	\end{enumerate}
	For an SBW-square $s$, we assume that each edge of $s$ is oriented
	so that the initial point and the terminal point have the sign $-$ and $+$, respectively.
	Now consider a set $S=\{s_{1}, \dots, s_{n}\}$ of $n$ copies of the SBW-square,
	and let $V_{+}(S)$ and $V_{-}(S)$, respectively,
	be the sets of positive vertices and negative vertices of the SBW-squares in $S$.
	
	For each bijection $\bi \colon V_{+}(S)\to V_{-}(S)$,
	we construct a squared complex (i.e.\ two-dimensional cubed complex), $\SSCS$, as follows.
	Let $E_{B}(S)$ and $E_{W}(S)$, respectively,
	be the set of the black edges and the white edges of the SBW-squares in $S$.
	Then $\bi$ induces a bijection $\Bib \colon E_{B}(S)\to E_{B}(S)$ as follows.
	For a black edge $e \in E_{B}(S)$,
	let $v$ be the positive vertex which forms the terminal point of $e$.
	Then $\Bib(e)$ is defined to be the unique black edge whose initial vertex is $\bi(v)$ (see Figure~\ref{scsquare2}).
	\begin{figure}
		\includegraphics[width=200pt]{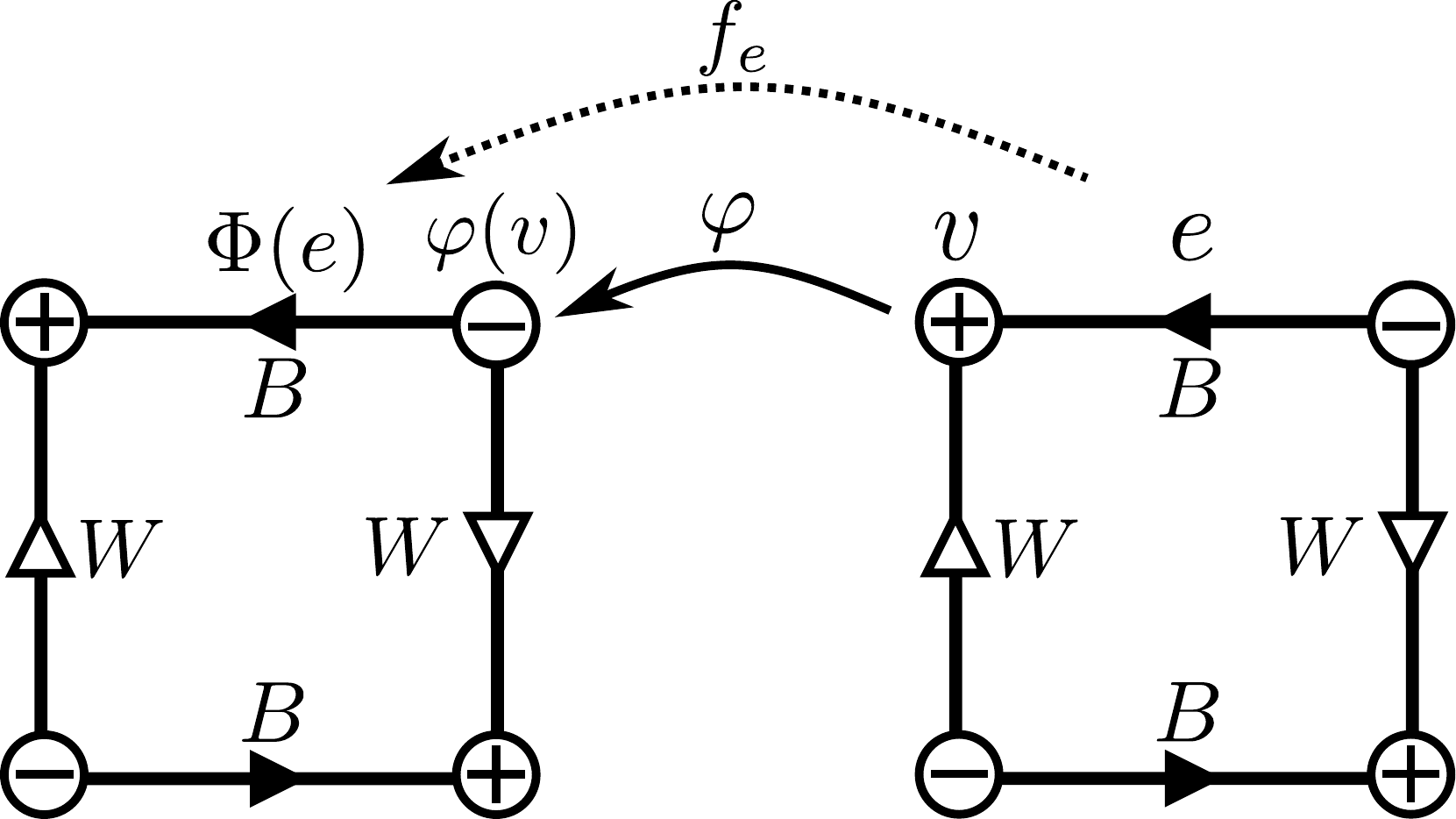}
		\caption{}
		\label{scsquare2.pdf}
	\end{figure}
	Similarly, $\bi$ induces a bijection $\Biw \colon E_{W}(S)\to E_{W}(S)$,
	such that $\Biw(e)$, for $e\in E_{W}(S)$, is the unique white edge whose initial vertex
	is the image of the terminal vertex of $e$ by $\bi$.
	Thus we obtain a bijection $\Bi:=\Bib \sqcup \Biw$ from $E(S):=E_{B}(S)\sqcup E_{W}(S)$ to itself.
	For each $e \in E(S)$, let $f_{e}\colon e\to \Bi(e)$ be the unique orientation-preserving linear homeomorphism.
	We regard the family $\{f_{e}\colon e\to \Bi(e)\}_{e\in E(S)}$ as a gluing information
	for the SBW-squares $S=\{s_{1}, \dots, s_{n}\}$,
	and denote the resulting squared complex by $\SSCS$.
	We call it the \textit{signed BW squared-complex} (or the \textit{SBW squared-complex}, for short)
	determined by the bijection $\bi \colon V_{+}(S)\to V_{-}(S)$.

	\begin{rem}\upshape
		A signed BW squared-complex is a special case of a VH-complex introduced by Wise \cite{Wise},
		which is defined to be a squared complex
		whose edges are partitioned into two classes V (vertical) and H (horizontal).
		Motivated by black/white checkerboard surfaces, 
		we use W and B, instead of V and H.
	\end{rem}
	
	For the SBW squared-complex $\SSCS$,
	we define the associated SBW cubed-complex, $\SSCC$, as follows.
	For the set $S=\{s_{1}, \dots, s_{n}\}$ of the SBW-squares,
	consider the set of the ``upper SBW-cubes'' $\{s_{i}\times [0,1]\}_{i=1}^{n}$
	and the ``lower SBW-cubes'' $\{s_{i}\times [-1,0]\}_{i=1}^{n}$.
	Consider also the set of ``upper side-faces" $F_{+}:=\{e\times [0,1]\}_{e\in E(S)}$
	and the set of ``lower side-faces" $F_{-}:=\{e\times [-1,0]\}_{e\in E(S)}$.
	Then the bijection $\Bi \colon E(S)\to E(S)$ induces the bijection
	$\BI \colon F_{-}\to F_{+}$ defined by $\BI(e\times [-1, 0])=\Bi(e)\times [0,1]$.
	Moreover, the linear homeomorphism $f_{e}\colon e\to \Bi(e)$ induces
	the linear homeomorphism $\hat{f}_{e}\colon e\times [-1,0] \to \Bi(e)\times [0,1]$
	defined by $\hat{f}_{e}(x,t)=(f_{e}(x),-t)$.
	By gluing the side-faces of the cubes $\{e\times [-1,0]\cup e\times [0,1]\}_{e\in E(S)}$
	by the family of homeomorphisms
	$\{\hat{f}_{e}\colon e\times [-1,0]\to \Bi(e)\times [0,1]\}_{e\in E(S)}$,
	we obtain a three-dimensional cubed complex.
	We denote it by $\SSCC$,
	and call it the \textit{signed BW cubed-complex} (or the \textit{SBW cubed-complex}, for short)
	determined by $\bi$.
	It should be noted that $\SSCS$ is a subcomplex of $\SSCC$,
	and there is a natural deformation retraction of $\SSCC$ onto $\SSCS$.

	For an alternating link $L \subset S^{3}$ represented by a connected alternating diagram $\G \subset S^{2}$,
	the Dehn complex $\D$ and the Aitchison complex $\A$ are identified with
	the SBW squared-complex $\SSCS$ and the SBW cubed-complex $\SSCC$, respectively,
	where $S$ and $\bi$ are defined as follows.
	Consider the checkerboard coloring of $(S^{2}, \G)$
	such that the associated black surface for $L$ has a positive half-twist at each crossing (see Figure~\ref{fig8}(a)).

	\begin{figure}
		\centering
		\begin{subfigure}{0.3\columnwidth}
			\centering
			\includegraphics[width=\columnwidth]{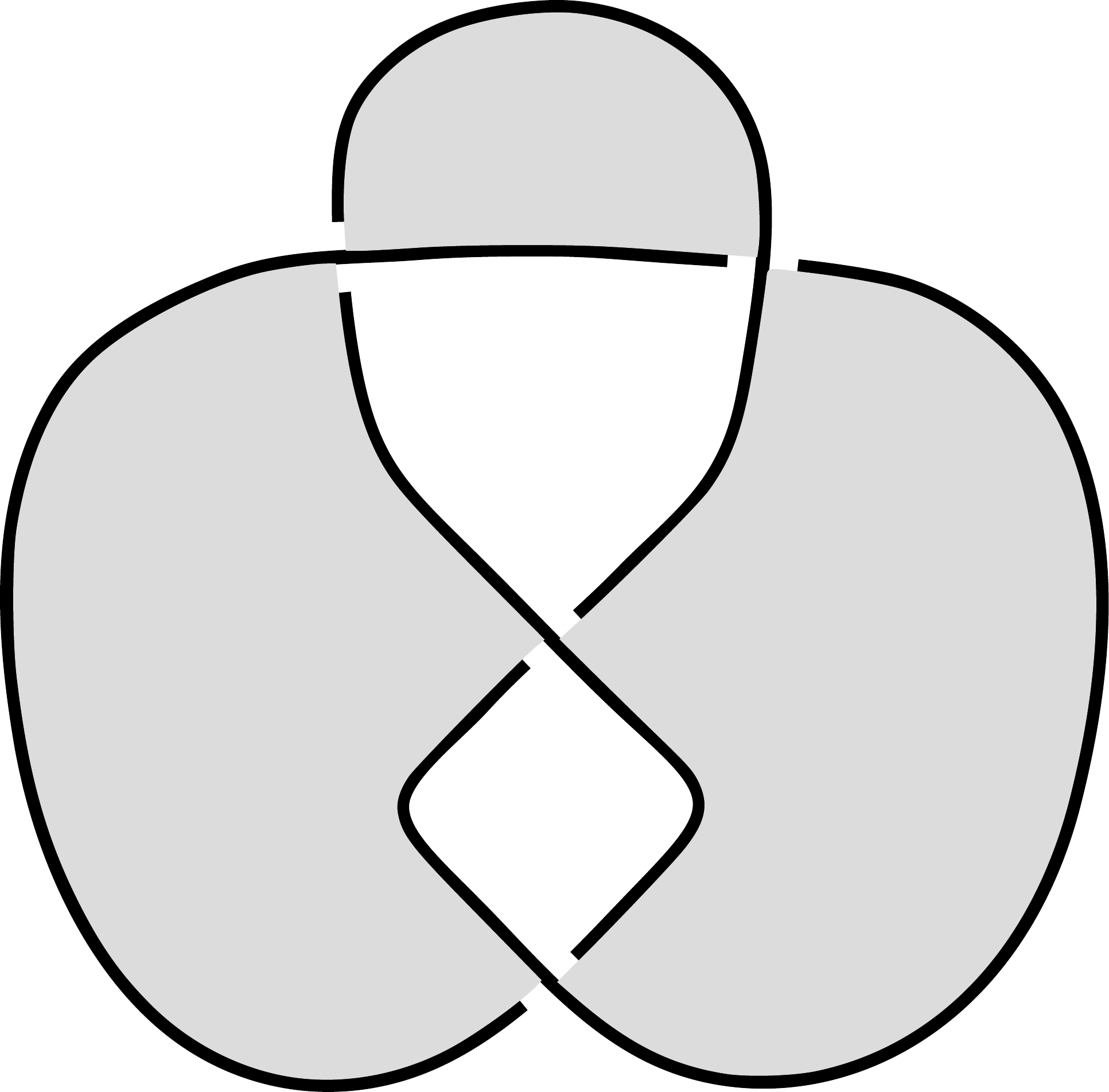}
			\caption*{(a)}
		\end{subfigure}
		\hspace{15mm}
		\begin{subfigure}{0.3\columnwidth}
			\centering
			\includegraphics[width=\columnwidth]{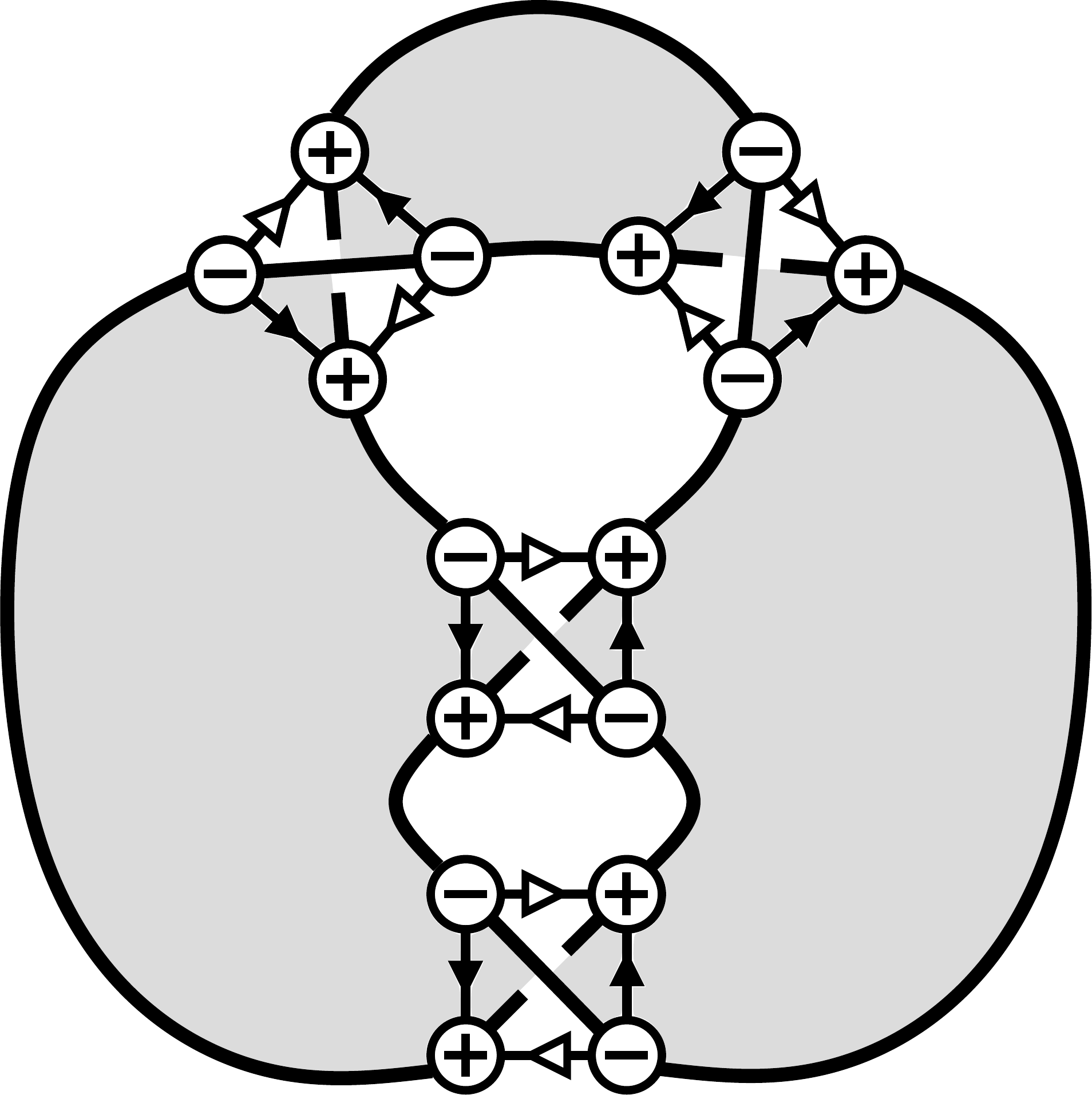}
			\caption*{(b)}
		\end{subfigure}
		\caption{}
		\label{fig8}
	\end{figure}
	Let $\{x_{1}, \dots, x_{n}\}$ be the vertex set of $\G$
	and let $S =\{s_{1}, \dots, s_{n}\}$ be the set of SBW-squares as illustrated in Figure~\ref{fig8}(b).
	To be precise,
	\begin{enumerate}
		\item $s_{i}$ is a square in $S^{2}$ which forms a relative regular neighborhood of $x_{i}$
			in $(S^{2}, \G)$, and the vertices of $s_{i}$ are contained in $\G$.
		\item Each vertex of $s_{i}$ has the sign $+$ or $-$
			according to whether it lies in an underpass or an overpass.
		\item Each edge of $s_{i}$ is colored $B$ or $W$
			according to whether it lies in a black region or a white region.
	\end{enumerate}
	Observe that $\G \setminus \bigcup_{i=1}^{n}\inter (s_{i})$ is a disjoint union of arcs
	and that the boundary of each of which consists of a vertex in $V_{+}(S)$ and a vertex in $V_{-}(S)$.
	This determines a bijection $\bi \colon V_{+}(S)\to V_{-}(S)$.
	Then the following proposition is obvious from the construction of the Aitchison complex.
	\begin{prop}\label{SCSD}
		Under the above setting,
		the SBW squared-complex $\SSCS$ and the SBW cubed-complex $\SSCC$ are isomorphic to
		the Dehn complex $\D$ and the Aitchison complex $\A$, respectively.
	\end{prop}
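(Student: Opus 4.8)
The plan is to spell out the dictionary implicit in the statement and then to check that the combinatorial gluing recipe reproduces the geometric face-pairings of Section~\ref{desc} edge by edge. By the very construction of $S$, the SBW-square $s_{i}$ is the middle square $\Delta_{i}^{+}\cap\Delta_{i}^{-}=s_{i}$ of the octahedron placed at the crossing $x_{i}$; its vertex-signs record whether a germ of $\G$ lies on an over- or an under-strand, and its edge-colors record the checkerboard colour of the adjacent region. Under this identification I would first observe that the bijection $\bi\colon V_{+}(S)\to V_{-}(S)$ coming from the arcs of $\G\setminus\bigcup_{i}\inter(s_{i})$ is exactly the pairing of square-vertices carried by the lifted edge-arcs $\tilde e$ of $\G$; thus $\bi$ encodes the incidence combinatorics of $\G$ together with its over/under data.

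Next I would fix the three-dimensional identification. Truncating the pyramid $\Delta_{i}^{\pm}$ at its apex $x_{i}^{\pm}$ produces a cube with bottom face $s_{i}$ and four side-faces sitting over the four edges of $s_{i}$, giving canonical identifications of $O_{i}^{+}$ with the upper SBW-cube $s_{i}\times[0,1]$ and of $O_{i}^{-}$ with the lower SBW-cube $s_{i}\times[-1,0]$, so that the side-face over an edge of $s_{i}$ becomes that edge times $[0,1]$ or $[-1,0]$. The substantive step is then to verify that the face-pairing of the octahedra agrees with the family $\{\hat f_{e}\}$ defining $\SSCC$. Fix an edge $e$ of $\G$ joining $x_{1},x_{2}$ with $\tilde e$ running from $x_{1}^{+}$ to $x_{2}^{-}$, and a region $R$ bordering $e$; the relative isotopies of Section~\ref{desc} glue the upper side-face of $s_{1}$ over $a_{1}b_{1}$ to the lower side-face of $s_{2}$ over $a_{2}b_{2}$ by the map sending $a_{1}\mapsto b_{2}$ and $b_{1}\mapsto a_{2}$.

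I claim this is precisely the gluing $\hat f_{a_{2}b_{2}}\colon (a_{2}b_{2})\times[-1,0]\to\Bi(a_{2}b_{2})\times[0,1]$. With the sign convention of the statement (a germ on an over-strand is negative and one on an under-strand is positive) one gets $a_{1}^{-},b_{1}^{+},a_{2}^{+},b_{2}^{-}$, so $a_{2}$ is the terminal vertex of $a_{2}b_{2}$ and $\bi(a_{2})=a_{1}$; hence, taking $R$ black, $\Bib(a_{2}b_{2})$ is the black edge with initial vertex $a_{1}$, namely $a_{1}b_{1}$. The orientation-preserving linear map $f_{a_{2}b_{2}}\colon a_{2}b_{2}\to a_{1}b_{1}$ then sends $a_{2}\mapsto b_{1}$ and $b_{2}\mapsto a_{1}$, which is inverse to the geometric identification and hence defines the same gluing of faces, while the coordinate flip $t\mapsto -t$ in $\hat f$ matches the apex-to-apex correspondence $x_{1}^{+}\leftrightarrow x_{2}^{-}$. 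Taking instead the white region bordering $e$ yields the identical verification for $\Biw$. Letting the pair consisting of an edge of $\G$ and an adjacent region range over all choices exhausts the $4n$ side-face gluings, so these checks establish a cubical isomorphism $\SSCC\cong\A$; restricting to the bottom squares, the induced edge-identifications are exactly the gluing maps defining $\SSCS$, so the subcomplex $\SSCS$ is carried onto $\D$, and the two deformation retractions correspond as well.

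I expect the only genuine difficulty to be the sign-and-orientation bookkeeping of the third paragraph: one must pin down the sign convention and the vertex correspondence $a_{i}\leftrightarrow b_{3-i}$ so that the combinatorially defined $\Bi$ reproduces the geometric pairing with the correct orientation, rather than its inverse or a reflected version. Once a single pair (edge of $\G$, adjacent region) is checked, all remaining cases follow by the symmetry of the construction.
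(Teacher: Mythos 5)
Your proposal is correct and takes essentially the same route as the paper, which in fact gives no written argument at all: it asserts the proposition is ``obvious from the construction of the Aitchison complex.'' Your sign-and-orientation bookkeeping (the identifications $\bi(a_{2})=a_{1}$ and $\Bib(a_{2}b_{2})=a_{1}b_{1}$, and the observation that $\hat{f}_{a_{2}b_{2}}$ is the inverse of the geometric face-pairing $a_{1}\mapsto b_{2}$, $b_{1}\mapsto a_{2}$, hence defines the same gluing) is exactly the verification the paper leaves to the reader, carried out correctly.
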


\section{Main results}\label{chara}
	Proposition~\ref{SCSD} shows that the Aitchison complex $\A$ of a connected alternating diagram $\G$
	can be described as the SBW cubed-complex $\SSCC$.
	In this section, we prove Theorem~\ref{SCCA},
	which gives a characterization of the Aitchison complexes of connected alternating diagrams
	among the SBW cubed-complexes.
	
	Let $S=\{s_{1}, \dots, s_{n}\}$ be a set of SBW-squares,
	and let $\bi \colon V_{+}(S)\to V_{-}(S)$ be a bijection,
	where $V_{\pm}(S)$ are the sets of positive and negative vertices of the SBW-squares in $S$.
	Let $\Bi =\Bib \sqcup \Biw$ be the bijection from $E(S)=E_{B}(S)\sqcup E_{W}(S)$ to itself
	determined by $\bi$.
		
	\begin{thm}\label{SCCA}
		Under the above setting,
		the SBW cubed-complex $\SSCC$ is isomorphic to the Aitchison complex $\A$
		of a connected alternating diagram $\G$,
		if and only if the bijection $\Bi$ satisfies
		\begin{equation*}
			|E(S)/\langle \Bi \rangle |=|S|+2,
		\end{equation*}
		where $E(S)/\langle \Bi \rangle$ denotes the quotient space of the cyclic group action
		on $E(S)$ induced by $\Bi$,
		and $|\cdot|$ denotes the cardinality of a set.
	\end{thm}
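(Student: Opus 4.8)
The plan is to use Proposition~\ref{SCSD} as a two-way bridge between the data $(S,\bi)$ and an honest alternating diagram, reducing the numerical condition to Euler's formula. For the ``only if'' direction, suppose $\SSCC\cong\A$ for a connected alternating diagram $\G\subset S^{2}$. By Proposition~\ref{SCSD} the pair $(S,\bi)$ is the one read off from $\G$, so the squares $s_{i}$ are the crossings and the orbits of $\Bi$ are the complementary regions of $\G$: by the very rule defining $\Bi$, an orbit of $\Bib$ traces the boundary of a black region and an orbit of $\Biw$ that of a white region. Hence $|E(S)/\langle\Bi\rangle|$ is the number of regions of $\G$, and since $\G$ is a connected $4$-valent graph on $S^{2}$ with $|S|$ vertices and $2|S|$ edges, Euler's formula $V-E+F=2$ gives $F=|S|+2$.

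For the ``if'' direction I would reconstruct a diagram from $(S,\bi)$ and use the count to force it onto $S^{2}$. First build an abstract $4$-valent graph $\G$ whose vertices are the squares $s_{1},\dots,s_{n}$ and whose edges are obtained by gluing each positive corner $v$ to the negative corner $\bi(v)$; thus $\G$ has $|S|$ vertices and $2|S|$ edges. The fixed cyclic order of the four corners of a square supplies a rotation system, making $\G$ a ribbon graph, and because every SBW-square carries the same orientation the rotation system is coherent, so the associated closed surface $\Sigma$ is orientable. Capping the boundary of a ribbon neighbourhood of $\G$ by discs gives a cellular embedding $\G\subset\Sigma$ whose faces are exactly the orbits of $\Bi$ (again by the gluing rule, the $\Bib$- and $\Biw$-orbits trace the black and white regions). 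Consequently $\chi(\Sigma)=|S|-2|S|+|E(S)/\langle\Bi\rangle|$, so the hypothesis $|E(S)/\langle\Bi\rangle|=|S|+2$ is equivalent to $\chi(\Sigma)=2$.

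The step I expect to be the real obstacle is upgrading $\chi(\Sigma)=2$ to the statement that $\Sigma$ is a single $2$-sphere, i.e.\ that $\G$ is connected and planar. This does not follow from the Euler characteristic alone: a disconnected $\Sigma$ such as a sphere together with a torus also has $\chi=2$, and one can exhibit SBW data of this mixed type, so the numerical condition must be paired with connectivity of $\SSCC$ (which is implicit, as $\SSCC$ is to be a connected link exterior). Granting that $\G$ is connected, $\Sigma$ has one component and $\chi(\Sigma)=2$ forces $\Sigma\cong S^{2}$. The alternating property is then automatic, since every edge of $\G$ runs from a positive ($=$ over) corner to a negative ($=$ under) corner, so crossings alternate along each strand, and the positive half-twist convention is encoded in the fixed colour/sign pattern of the SBW-square. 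Applying Proposition~\ref{SCSD} to this connected alternating $\G\subset S^{2}$ finally identifies $\SSCC$ with $\A$, completing the proof.
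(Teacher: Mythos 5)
Your proposal is essentially the paper's own proof. The ``only if'' direction is identical: identify the $\Bib$- and $\Biw$-orbits with the black and white regions of $\G$ and apply $\chi(S^{2})=2$ to the induced cell decomposition with $n$ vertices and $2n$ edges. In the ``if'' direction, your capped ribbon surface $\Sigma$ is precisely the paper's complex $M$: the paper attaches a connecting $1$-cell $\langle v, \bi(v)\rangle$ for each $v\in V_{+}(S)$, then attaches a black (resp.\ white) $2$-cell along the simple $1$-cycle traced by each $\langle\Bib\rangle$-orbit (resp.\ $\langle\Biw\rangle$-orbit), computes $\chi(M)=4n-6n+(n+|E(S)/\langle \Bi \rangle|)=2$, and recovers $\G$ by inserting an overpass and an underpass in each square. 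Your rotation-system formulation of orientability and your derivation of the alternating property from the sign rule on connecting edges are the same observations in different words.

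The one place you diverge is also the most interesting: your connectivity caveat identifies a genuine gap in the paper's own argument, not an artifact of your approach. The paper passes from ``$M$ orientable, $\chi(M)=2$'' directly to $M\cong S^{2}$ and asserts without comment that the resulting diagram is connected; but nothing in the definition of an SBW cubed-complex forbids disconnected data, and the mixed-type data you suspect really does exist. Take the disjoint union of the data of any connected alternating diagram in $S^{2}$ (contributing $|S_{1}|+2$ orbits) with the data of an alternating weave on the torus, say the $4$-crossing $(2,2)$-grid, which is checkerboard colorable with all crossings of the same type and contributes exactly $|S_{2}|$ orbits since the torus has Euler characteristic $0$. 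The union satisfies $|E(S)/\langle \Bi \rangle|=|S|+2$, yet $\SSCC$ is disconnected and hence not isomorphic to any Aitchison complex, so the ``if'' direction fails for it. The theorem as stated therefore needs connectedness of $\SSCS$ (equivalently, that $S$ admits no splitting into two nonempty $\bi$-invariant subfamilies) as a standing hypothesis --- or the orbit count must be imposed component-wise --- after which $\chi(M)=2$ does force $M\cong S^{2}$ and both your argument and the paper's go through. Note that the paper's closing remark, which speaks of ``the surface $M$'' having genus $\ge 1$ when the identity fails, already tacitly treats $M$ as connected. Your decision to flag this rather than paper over it is exactly right; just be aware that the step you called the real obstacle is not resolvable from the stated hypotheses alone, because without connectivity the statement is literally false.
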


	\begin{proof}
		We first prove the only if part.
		Suppose that an SBW cubed-complex $\SSCC$ is isomorphic to
		the Aitchison complex $\A$ of a connected alternating diagram $\G \subset S^{2}$.
		Then we may assume $S$ and $\bi$ are constructed from $\G$ as in Section~\ref{SC}.
		Observe that there is a one-to-one correspondence between
		$E_{B}(S)/\langle \Bi \rangle$ (resp.\ $E_{W}(S)/\langle \Bi \rangle$)
		and the set of the black (resp.\ white) regions of $\G$.
		Consider the cell decomposition of the projection plane $S^{2}$ obtained from $\G$.
		Then the above observation implies that
		$|E(S)/\langle \Bi \rangle |$ is equal to the number of 2-cells of the cell decomposition.
		Since the cell decomposition has $n$ vertices and each vertex has degree four,
		the number of $1$-cells is equal to $2n$ when $n = |S|$.
		Hence, we have
		\[2=\chi(S^{2})=n-2n+|E(S)/\langle \Bi \rangle |.\]
		This implies $|E(S)/\langle \Bi \rangle |=|S|+2$,
		completing the proof of the only if part.
		
		Next, we prove the if part.
		Suppose $|E(S)/\langle \Bi \rangle |=|S|+2$.
		By using this condition, we construct a connected alternating diagram $\G$ such that $\SSCC \cong \A$.
		Consider the two-dimensional complex, $X$,
		obtained from the set $S=\{s_{1}, \dots, s_{n}\}$ of SBW-squares
		by attaching a 1-cell $\gamma =\langle v, \bi(v) \rangle$ for each $v\in V_{+}(S)$,
		and we now attach black/white 2-cells to $X$, as follows.
		
		Consider the action of the cyclic group $\langle \Bib \rangle$ on $E_{B}(S)$,
		and pick its orbit $\{e, \Bib(e), \dots, \Bib^{k-1}(e)\}$ with $\Bib^{k}(e)=e$.
		Then for each $i\in \{0, \dots, k-1\}$,
		the terminal vertex of $\Bib^{i}(e)$ is mapped by $\bi$ to the initial vertex of $\Bib^{i+1}(e)$,
		and so there is an edge, $\gamma_{i}$, of $X$ joining these two points.
		Then,
		\[e+\gamma_{0}+\Bib(e)+\gamma_{1}+\dots +\Bib^{k-1}(e)+\gamma_{k-1}\]
		determines a simple 1-cycle, where $\gamma_{i}$ is given a natural orientation.
		We attach a black 2-cell to $X$ along the 1-cycle.
		Similarly, each orbit of the action of $\langle \Biw \rangle$ on $E_{W}(S)$ determines a simple 1-cycle,
		and we attach a white 2-cell to $X$ along the 1-cycle.

		Let $M$ be the two-dimensional cell complex obtained from $X$ by attaching black/white 2-cells in this way.
		We can easily observe that $M$ is an orientable $2$-manifold.
		To compute the Euler characteristic $\chi(M)$, observe the following.
		\begin{enumerate}
			\item The number of vertices of $M$ is $4n$ with $n=|S|$,
				since each vertex is contained in an SBW-square.
			\item The edge set of $M$ consists of $4n$ edges of the SBW-squares
				and $2n$ ``connecting'' edges.
				So, the number of edges of $M$ is equal to $6n$.
			\item The face set of $M$ consists of $n$ squares
				and $|E_{B}(S)/\langle \Bib \rangle |$ black 2-cells
				and $|E_{W}(S)/\langle \Biw \rangle |$ white 2-cells.
				So, the number of faces of $M$ is $n + | E(S)/\langle \Bi \rangle |$.
		\end{enumerate}
		Hence, \[\chi(M)=4n-6n+(n+|E(S)/\langle \Bi \rangle |)=-n+|E(S)/\langle \Bi \rangle |, \]
		and so, by the assumption, it is equal to $2$.
		Therefore, $M$ is homeomorphic to $S^{2}$.
		
		Add to an overpass connecting two negative vertices and an underpass connecting two positive vertices
		in each SBW-square.
		The union of connecting edges and overpasses and underpasses
		gives a connected link diagram $\G$, which is clearly alternating.
		Moreover, it is obvious from the construction that $\SSCC$ is isomorphic to $\A$.
	\end{proof}

	\begin{cor}\label{main-cor}
		A compact $3$-manifold $M$ is homeomorphic to the exterior of an alternating link $L$
		represented by a connected alternating diagram,
		if and only if $M$ is homeomorphic to the underlying space of an SBW cubed-complex $\SSCC$
		such that $|E(S)/\langle \Bi \rangle |=|S|+2$.
	\end{cor}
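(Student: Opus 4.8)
The plan is to obtain the corollary as a direct consequence of Theorem~\ref{SCCA}, combined with the Proposition of Section~\ref{desc}, which guarantees that the Aitchison complex $\A$ of a connected alternating diagram $\G$ gives a cubical decomposition of the exterior $E(L)$; in particular, the underlying space of $\A$ is homeomorphic to $E(L)$. The essential point is that an isomorphism of SBW cubed-complexes induces a homeomorphism of their underlying spaces, so that the combinatorial statement of Theorem~\ref{SCCA} can be transported to the level of homeomorphism types of compact $3$-manifolds.

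For the only if direction, I would begin with a compact $3$-manifold $M$ homeomorphic to $E(L)$, where $L$ is represented by a connected alternating diagram $\G$. Proposition~\ref{SCSD} furnishes a set $S$ of SBW-squares and a bijection $\bi \colon V_{+}(S)\to V_{-}(S)$ for which the Aitchison complex $\A$ is isomorphic to $\SSCC$; since $\SSCC \cong \A$ is then the Aitchison complex of a connected alternating diagram, the only if part of Theorem~\ref{SCCA} gives $|E(S)/\langle \Bi \rangle| = |S|+2$. Applying the Proposition of Section~\ref{desc}, the underlying space of $\SSCC$ is homeomorphic to that of $\A$, hence to $E(L)$, hence to $M$, and the counting condition holds as required.

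Conversely, starting from an SBW cubed-complex $\SSCC$ with $|E(S)/\langle \Bi \rangle| = |S|+2$ whose underlying space is homeomorphic to $M$, I would invoke the if part of Theorem~\ref{SCCA} to produce a connected alternating diagram $\G$ with $\SSCC \cong \A$, where $\A$ is the Aitchison complex of $\G$. Writing $L$ for the alternating link represented by $\G$ and using the Proposition of Section~\ref{desc} once more, the underlying space of $\A$, and hence that of $\SSCC$, is homeomorphic to $E(L)$; therefore $M$ is homeomorphic to $E(L)$, the exterior of an alternating link represented by a connected alternating diagram.

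The argument is almost entirely a bookkeeping of the already-established results, so I do not expect a genuine obstacle. The only point demanding care is maintaining the distinction between isomorphism of cubed-complexes, at which level Theorem~\ref{SCCA} operates, and homeomorphism of underlying spaces, at which level the corollary is phrased; bridging the two requires recording explicitly that an isomorphism of SBW cubed-complexes is in particular a homeomorphism of underlying spaces, together with the cubical-decomposition Proposition of Section~\ref{desc} that identifies the underlying space of the Aitchison complex with the link exterior.
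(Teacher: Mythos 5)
Your proposal is correct and is exactly the argument the paper intends: the corollary is stated without separate proof precisely because it follows from Theorem~\ref{SCCA} together with Proposition~\ref{SCSD} and the Proposition of Section~\ref{desc} identifying the underlying space of $\mathcal{A}(\Gamma)$ with $E(L)$, just as you lay out. Your explicit remark that an isomorphism of cubed complexes induces a homeomorphism of underlying spaces is the (routine) bridge the paper leaves implicit.
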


	\begin{rem}\upshape
		If the identity in Theorem~\ref{SCCA} is not satisfied,
		then the surface $M$ in the proof is a closed orientable surface of genus $\ge 1$,
		and $\G$ is an alternating diagram in the surface $M$.
		In this case the underlying space of $\A$ is homomorphic to
		the \textit{Dehn space} of the link $L$ in $M\times [-1, 1]$
		represented by the diagram $\G$,
		namely the space obtained from the exterior of $L$
		by coning off $M \times \{\pm 1\}$ (see~\cite{13BH}).
		We note that the ``Dehn complexes'' of these spaces and related spaces are studied extensively
		in the works \cite{03HR, 12Ha, 13BH}
		by Harlander, Rosebrock, and Byrd.
	\end{rem}

\end{document}